\numberwithin{equation}{section}
\newcommand{\bbR}{\mathbb{R}}
\newcommand{\bbC}{\mathbb{C}}
\newcommand{\bbH}{\mathbb{H}}
\newcommand{\rmq}{\rmq}
\renewcommand{\epsilon}{\varepsilon}
\newcommand{\Bl}{\mathrm{Bl}}
\def\dist{\mathrm{dist}}
\newtheorem{theorem}{Theorem}
\newtheorem{lemma}{Lemma}
\theoremstyle{definition} \newtheorem{definition}{Definition}
\newtheorem{remark}{Remark}
\author[Gon\c calo Oliveira]{Gon\c calo Oliveira}
\address{Gon\c calo Oliveira, Department of Mathematics and Center for Mathematical Analysis, Geometry and Dynamical Systems, Instituto Superior T\'ecnico, Lisbon,\\ Av. Rovisco Pais, 1049-001 Lisboa,  Portugal}
\email{goncalo.m.f.oliveira@tecnico.ulisboa.pt}
\email{galato97@gmail.com}
\author[Rosa Sena-Dias]{Rosa Sena-Dias}
\address{Rosa Sena-Dias, Department of Mathematics and Center for Mathematical Analysis, Geometry and Dynamical Systems, Instituto Superior T\'ecnico, Lisbon,\\ Av. Rovisco Pais, 1049-001 Lisboa,  Portugal}
\email{rsenadias@math.ist.utl.pt}
\title[]{Scalar-flat K\"ahler metrics with varying cone angle singularities along a divisor} 
\begin{document}

\begin{abstract}
	Using an ansatz due to LeBrun we construct complete scalar-flat K\"ahler metrics with a prescribed varying conical singularity along a divisor.
\end{abstract}

\maketitle

\tableofcontents

\section{Introduction and main result}

Cone angle K\"ahler metrics have recently come into the spotlight following the role they have played in the proof of the Yau-Tian-Donaldson conjecture. By using a continuity method for a family of singular metrics with a cone angle along a divisor, Chen-Donaldson-Sun have proved the existence of smooth K\"ahler-Einstein metrics on compact K\"ahler manifolds which are K-stable (see \cite{cds}). In \cite{dconeangle}, Donaldson developed a framework for working with cone angle K\"ahler metrics as they arise in the program carried out in \cite{cds} but such metrics had been considered before (see \cite{j} or \cite{m} for instance). There are also explicit constructions of K\"ahler metrics with cone angles. In \cite{b} Bourbon constructs Ricci-flat K\"ahler metrics with cone angles using the Gibbons-Hawking ansatz whereas in \cite{bs} Borbon-Spotti construct K\"ahler-Einstein metrics with a cone angle via the Calabi ansatz.

In \cite{LeBrun}, LeBrun proves the existence of self-dual metrics on blow-ups of $\bbC^2$ by using an ansatz for scalar-flat K\"ahler metrics with an $S^1$-symmetry generalising the Gibbons--Hawking's ansatz \cite{GH}. Although LeBrun's ansatz is given as a means to construct specific examples, it holds in great generality and has not been fully exploited. 

In this note, we use LeBrun's ansatz to construct scalar-flat K\"ahler metrics on arbitrary blow-ups of $\bbC^2$ with a varying cone angle along a divisor. Our goal is to take a small step towards answering a question of Atiyah-LeBrun (see \cite{Atiyah}) concerning the existence of Einstein metrics with varying cone angles. 

Our main result, Theorem \ref{thm:Main}, states that it is possible to find zero scalar curvature K\"ahler metrics with a prescribed varying cone angle along a divisor $D$. We shall say that any such metric with cone singularities along $D$ is complete if any geodesic not intersecting $D$ can be continued for all time. 

\begin{theorem}\label{thm:Main}
	Let $k \in \mathbb{N}_0$, $\xi_1, \ldots , \xi_k \in \mathbb{C} \times \lbrace 0 \rbrace \subset \mathbb{C}^2$ and $\Bl_{\xi_1, \ldots , \xi_k}\mathbb{C}^2$ be the blow-up of $\mathbb{C}^2$ at the points $\xi_1, \ldots , \xi_k$. 
	
	Then, for any smooth positive function {over $\mathbb{CP}^1 \times \lbrace 0 \rbrace=\left( \mathbb{C} \cup \lbrace \infty \rbrace\right) \times \lbrace 0 \rbrace$}, there is a complete scalar-flat K\"ahler metric on $\Bl_{\xi_1, \ldots , \xi_k}\mathbb{C}^2$ which has cone angle $\beta$ along the strict transform of $\mathbb{C} \times \lbrace 0 \rbrace$.
\end{theorem}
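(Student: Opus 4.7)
The strategy is to apply LeBrun's ansatz for $S^1$-invariant scalar-flat K\"ahler metrics. I choose the $S^1$-action on $\mathbb{C}^2$ by $e^{i\theta}\cdot(z_1, z_2) = (z_1, e^{i\theta}z_2)$, whose fixed set is $D = \mathbb{C}\times\{0\}$; the quotient is the closed half-space $\mathbb{C}\times [0, \infty)$ with third coordinate $z = |z_2|^2/2$. In the parametrisation
\[
g = w\bigl(e^u(dx^2 + dy^2) + dz^2\bigr) + w^{-1}(d\theta + A)^2,
\]
the flat K\"ahler metric on $\mathbb{C}^2$ corresponds to $u(z) = \log(2z)$ and $w(z) = 1/(2z)$, which trivially satisfy the $\mathrm{SU}(\infty)$-Toda equation and its linearisation $L w := w_{xx} + w_{yy} + (e^u w)_{zz} = 0$. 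I keep $u = \log(2z)$ throughout and reduce the theorem to constructing an appropriate $w$.

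A direct calculation, passing to $r = \sqrt{2z}$, identifies the axial behaviour of $w$ with the cone angle of $g$: the metric $g$ has cone angle $\beta(z_1)$ along $D$ precisely when $w \sim 1/(2\beta(z_1) z)$ as $z \to 0^+$. Likewise each blow-up at $\xi_j = (a_j, b_j, 0) \in D$ corresponds to a specific singular behaviour of $w$ at the corresponding axis point. The change of variable $z = s^2/2$ converts the operator $L$ into $\partial_x^2 + \partial_y^2 + \partial_s^2 + 3 s^{-1}\partial_s$, which is the six-dimensional Euclidean Laplacian on $\mathbb{R}^2 \times \mathbb{R}^4$ restricted to $\mathrm{SO}(4)$-invariant functions of the $\mathbb{R}^4$-factor. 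Consequently the fundamental solution of $L$ with point source on the axis at $(a, b, 0)$ is
\[
G_{(a, b)}(x, y, z) = \frac{1}{\bigl((x - a)^2 + (y - b)^2 + 2 z\bigr)^2}.
\]

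I then propose to take
\[
w(x, y, z) = \frac{1}{\pi}\int_{\mathbb{C}} \beta(a, b)^{-1}\, G_{(a, b)}(x, y, z)\, da\, db + \sum_{j = 1}^{k} m_j\, G_{(a_j, b_j)}(x, y, z),
\]
for suitably chosen residues $m_j > 0$. Off the axis and away from the $\xi_j$ one has $L w = 0$ by construction, since each term solves $L G_{(a, b)} = 0$ there. A concentration-of-mass argument (writing $(a, b) = (x + \sqrt{2z}\alpha, y + \sqrt{2z}\gamma)$ and letting $z \to 0^+$) verifies the desired axial asymptotic $w \sim 1/(2\beta(z_1) z)$ for the integral term, while each point source contributes the LeBrun-type blow-up singularity at $\xi_j$. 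The smoothness of $\beta$ on the compactified $\mathbb{CP}^1$ and the pointwise positivity $\beta > 0$ guarantee that $w$ is smooth and positive on $\{z > 0\}\setminus\{\xi_1, \ldots, \xi_k\}$.

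The main obstacle I anticipate is the topological compatibility of the connection $A$, which the ansatz determines via $dA = w_x\, dy\wedge dz + w_y\, dz\wedge dx + (w e^u)_z\, dx\wedge dy$: one must verify that the periods of $dA/(2\pi)$ over the $2$-spheres linking each $\xi_j$ are integral, which pins down the residues $m_j$ and ensures that the $S^1$-bundle closes off smoothly across the exceptional divisors to produce $\Bl_{\xi_1, \ldots, \xi_k}\mathbb{C}^2$. Once this is in place, smoothness of $g$ away from $D$ and the exceptional divisors, the cone angle $\beta$ along the strict transform of $D$, and completeness (from $w$ decaying like a Newton kernel at infinity, so the metric is asymptotic to the flat Euclidean model) all follow by construction.
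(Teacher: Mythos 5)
Your reduction to the linear equation $Lw=0$ with $u=\log(2z)$ is LeBrun's hyperbolic ansatz in disguise: substituting $V=2zw$ turns $Lw=0$ into $\Delta_hV=0$ for the hyperbolic metric on the upper half-space, and your integral term $\frac{1}{\pi}\int\beta^{-1}G_{(a,b)}\,da\,db$ is exactly $(2z)^{-1}$ times the Poisson integral of $\beta^{-1}$, i.e.\ $(2z)^{-1}u_\beta$, where $u_\beta$ is the solution of the Dirichlet problem at infinity with data $\beta^{-1}$ that the paper imports from Anderson and Sullivan. So for $k=0$ your construction coincides with the paper's; what you would still need to supply is a quantitative boundary asymptotic ($u_\beta=\beta^{-1}+O(z)$, proved in the paper by a barrier argument) to justify the cone-angle statement, and a decay estimate for the connection form $A$ in a suitable gauge to justify completeness -- your parenthetical claim that $w$ decays like a Newton kernel at infinity is not right (it decays like $1/(2z)$, matching the flat model), though the intended conclusion that the metric is asymptotically quasi-isometric to the warped flat model is the correct one.

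The genuine error is in the blow-up mechanism for $k\geq 1$. Your point sources $m_jG_{(a_j,b_j)}$ sit at points of the axis $\lbrace z=0\rbrace$, i.e.\ at boundary points of $\bbH$; in the hyperbolic picture $2z\,G_{(a_j,b_j)}$ is (up to a factor of $\pi$) the Poisson kernel at the boundary point $(a_j,b_j)\in S(\infty)$, not a Green's function. Adding a Poisson kernel does not produce a blow-up: near $(a_j,b_j,0)$ one has $w\sim m_j\,(r^2+2z)^{-2}$ with $r^2=(x-a_j)^2+(y-b_j)^2$, so along the ray $r=0$ the metric contains the term $w\,dz^2\sim \tfrac{m_j}{4}z^{-2}dz^2$ and the point $(a_j,b_j,0)$ is at infinite distance; you have created a complete end (equivalently, added a delta-mass to the boundary data $\beta^{-1}$), not an exceptional $\mathbb{CP}^1$. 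To obtain $\Bl_{\xi_1,\dots,\xi_k}\bbC^2$ the sources must be placed at \emph{interior} points $p_j=(a_j,b_j,c_j)$ of $\bbH$ with $c_j>0$ lying above the $\xi_j$, using the hyperbolic Green's functions $G_{p_j}$ -- in your six-dimensional picture, the Newtonian potential of the $SO(4)$-orbit $\lbrace(a_j,b_j)\rbrace\times S^3(\sqrt{2c_j})$ rather than of a point on the axis. The exceptional curve is then the $2$-sphere lying over the vertical segment joining $(a_j,b_j,0)$ to $p_j$, and the smooth-extension/integrality condition fixes the normalization of $G_{p_j}$ rather than a residue at an axis point; this is the content of pp.~231--234 of LeBrun's paper which the present paper quotes. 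Without this correction your $w$ does not define a metric on the blow-up for any $k\geq 1$.
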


In particular, if $k=0$, Theorem \ref{thm:Main} constructs complete scalar-flat K\"ahler metrics on $\mathbb{C}^2$ with prescribed cone angle along $\mathbb{C} \times \lbrace 0 \rbrace$.

{In fact, given that these metrics are K\"ahler and scalar-flat, they are also anti-self-dual. To our knowledge these are also the first example of anti-self-dual metrics with varying cone angle along a $2$-dimensional submanifold. Furthermore, just as in \cite{LeBrun}, we can conformally change these metrics so that they compactify, thus yielding anti-self-dual metrics with a varying cone angle on compact manifolds. }

{\noindent \textbf{Acknowledgments:} We would like to thank Joel Fine for his interest in this work and for useful suggestions.}

\section{LeBrun's hyperbolic ansatz}

Recall that LeBrun's ansatz for scalar-flat K\"ahler metrics with circle symmetry can be written in terms of two functions $v$ and $W$. It yields a K\"ahler metric of the form
\begin{align*}
	g & = W^{-1} \eta^2 + W dx_1^2 + W e^v (dx_2^2 + dx_3^2 ) \\
	\omega & = dx_1 \wedge \eta + W e^v dx_2 \wedge dx_3,
\end{align*}
with $\eta$ a connection form on a circle bundle $\hat{X}^4$ over an open set $U \subset \mathbb{R}^3$ defined in terms of $v$ and $W$. This connection is such that its curvature is
$$d \eta = \partial_{x_1} (We^v) dx_2 \wedge dx_3 + \partial_{x_2}W dx_3 \wedge dx_1 + \partial_{x_3}W dx_1 \wedge dx_2 . $$
In particular, from $d^2 \eta =0$ we have
\begin{equation}\label{eq:W compatibility}
	\partial_{x_1}^2 (We^v) + \partial_{x_2}^2 W + \partial_{x_3}^2 W = 0 .
\end{equation}
Such a K\"ahler metric has scalar curvature given by
$$s= - \frac{\partial_{x_1}^2 (e^v) + \partial_{x_2}^2 v + \partial_{x_3}^2 v}{ W e^v }.$$
LeBrun's hyperbolic ansatz consists in setting $v(x_1,x_2,x_3)=\log (2x_1)$ in which case $\partial^2_{x_1}(e^v)=0$ and also $\partial_{x_2}v=0=\partial_{x_3}v$ so the metric is scalar-flat. Furthermore $\partial_{x_1}^2(e^vW)= \partial_{x_1}^2 (2x_1 W ) = \partial_{x_1}^2 (2x_1 W )$ and so the compatibility equation \ref{eq:W compatibility} becomes
\begin{align*}
	0 & = \partial_{x_1}^2 (We^v) + \partial_{x_2}^2 W + \partial_{x_3}^2 W \\
	& = \partial_{x_1}^2 (2x_1W) + \frac{\partial_{x_2}^2 (2x_1W) + \partial_{x_3}^2 (2x_1W)}{2x_1}  \\
	& = \Delta_{h} V,
\end{align*}
where $V=2x_1W$ and $h=\frac{dx_1^2}{(2x_1)^2} + \frac{1}{2x_1} (dx_2^2+dx_3^2)$. Therefore, if such a $V$ solves $\Delta_{h}V=0$, the equation for $\eta$ is (up to gauge) 
\begin{align*}
	d \eta & = \partial_{x_1} (We^v) dx_2 \wedge dx_3 + \partial_{x_2}W dx_3 \wedge dx_1 + \partial_{x_3}W dx_1 \wedge dx_2 \\
	& = \partial_{x_1} (2x_1W) dx_2 \wedge dx_3 + \partial_{x_2}(2x_1 W) dx_3 \wedge \frac{dx_1}{2x_1} + \partial_{x_3}(2x_1W) \frac{dx_1}{2x_1} \wedge dx_2 \\
	& = \ast_{h} dV,
\end{align*}
which can be solved for $\eta,$ a connection on a line bundle over $U,$ provided that $\ast_{h} dV$ has integer periods.

\begin{remark}
	Writing $z=\sqrt{2x_1}$, the metric $h$ becomes
	\begin{equation}\label{eq:hyperbolic metric}
		h=\frac{dz^2+dx_2^2+dx_3^2}{z^2},
	\end{equation}
	and so is the hyperbolic metric on
	$$\bbH=\lbrace (z,x_2,x_3) \in \mathbb{R}^3 \ | \ z>0 \rbrace. $$
\end{remark}

We can restate LeBrun's result as follows. 

\begin{theorem}[LeBrun in \cite{LeBrun}]\label{thm:LeBrun}
	Let $h$ denote the hyperbolic metric on the upper half plane. Let $U \subset \bbH$ and $V$ a positive $h$-harmonic function on $U$ such that $\ast_{h_v} dV$ represents an integral class in $\bbH^2_{dR}(U, \mathbb{R})$. Then, there is a circle bundle $M \to U$ and a connection $1$-form $\eta$ over $U$ satisfying $d\eta = \ast_{h_v} dV$. Furthermore, the metric and symplectic form
	\begin{align*}
		g & = z^2 \left( V^{-1} \eta^2 + V h \right), \\
		\omega & = z dz \wedge \eta + V dx_2 \wedge dx_3,
	\end{align*}
	equip $M$ with a scalar-flat K\"ahler metric.
\end{theorem}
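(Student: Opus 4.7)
My plan is to verify the three conclusions---existence of the circle bundle with connection, the K\"ahler property, and vanishing of the scalar curvature---by unwinding the substitutions carried out in the preceding discussion. The translation is essentially algebraic: specializing $v = \log(2x_1)$ and $W = V/(2x_1)$ in LeBrun's general ansatz and changing variables to $z = \sqrt{2x_1}$ (so that $dx_1 = z\,dz$, $2x_1 = z^2$, and $We^v = V$) converts the general expressions for the metric and symplectic form into precisely the ones in the statement. Thus it suffices to check the three claims in the hyperbolic form of the ansatz.

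For the bundle and connection, harmonicity of $V$ yields $d(\ast_h dV) = (\Delta_h V)\,\dvol_h = 0$, so $\ast_h dV$ is a closed $2$-form. The integrality hypothesis together with standard prequantization then produces a principal $U(1)$-bundle $M \to U$ with a connection $1$-form $\eta$ satisfying $d\eta = \ast_h dV$. Closedness of $\omega$ is the next step: since $h = z^{-2}\delta$ on $\bbH$, the conformal rule $\ast_h = z^{-1}\ast_\delta$ on $1$-forms gives the explicit expression
\[
\ast_h dV = z^{-1}\bigl(\partial_z V\,dx_2\wedge dx_3 + \partial_{x_2}V\,dx_3\wedge dz + \partial_{x_3}V\,dz\wedge dx_2\bigr),
\]
from which $d(z\,dz\wedge\eta) = -z\,dz\wedge d\eta = -\partial_z V\,dz\wedge dx_2\wedge dx_3$ exactly cancels $d(V\,dx_2\wedge dx_3)$. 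Vanishing of the scalar curvature is then immediate from the general formula recalled in the excerpt: with $v = \log(2x_1)$ the numerator $\partial_{x_1}^2 e^v + \partial_{x_2}^2 v + \partial_{x_3}^2 v$ vanishes identically.

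The step I expect to require the most care is verifying that the almost-complex structure $J$ determined by the pair $(g,\omega)$ is integrable, so that $g$ is genuinely K\"ahler and not merely almost-K\"ahler. I would take the local $(1,0)$-coframe on $M$ to be
\[
\theta^1 = W\,dx_1 + i\,\eta, \qquad \theta^2 = V^{1/2}(dx_2 + i\,dx_3),
\]
which diagonalizes $g$ and $\omega$ with the correct orientation, and then check the Newlander--Nirenberg condition that $d\theta^j$ has no $(0,2)$ component. For $\theta^2$ this is immediate since $d\theta^2 = \tfrac{1}{2}V^{-1}\,dV\wedge\theta^2$ involves a real $1$-form wedged with a $(1,0)$-form. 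For $\theta^1$, plugging the formula for $d\eta$ from the excerpt into $d\theta^1 = dW\wedge dx_1 + i\,d\eta$ lets one collect the off-fiber terms as a multiple of $dx_1\wedge(dx_2 + i\,dx_3)$ together with a multiple of $dx_2\wedge dx_3$; both of these lie in $\Omega^{2,0}\oplus\Omega^{1,1}$, so no $dx_1\wedge(dx_2 - i\,dx_3)$ term (the only possible source of a $(0,2)$ piece) appears. Once integrability is established, compatibility of $g$, $\omega$, and $J$ gives the K\"ahler condition, completing the proof.
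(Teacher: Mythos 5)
Your proposal is correct and follows the same route as the paper: both reduce the statement to LeBrun's general ansatz via the substitutions $v=\log(2x_1)$, $V=2x_1W=We^{v}$ and $z=\sqrt{2x_1}$, obtain the bundle and connection from harmonicity of $V$ together with the integrality hypothesis, and read off scalar-flatness from the recalled curvature formula. The only difference is that you additionally verify by hand the closedness of $\omega$ and the integrability of $J$ (both computations check out, and your coframe $\theta^1=W\,dx_1+i\eta$ agrees up to a conformal factor with the coframing $\alpha_2=dz+izV^{-1}\eta$ the paper writes down later), whereas the paper simply inherits the K\"ahler property from LeBrun's general result; this makes your argument more self-contained without changing the approach.
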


In \cite{LeBrun} the author goes on to study the metrics obtained by setting
$$V=1+\sum_{i=1}^k G_{p_i},$$
where $p_1, \ldots , p_l \in \bbH$ are all distinct and each $G_{p_i}$ denotes the Green's functions based at $p_i$ which converges to zero at infinity, i.e.
$$\Delta_{h}V=\sum_{i=1}^k  \delta_{p_i} , \ \ V \to 1\, \text{at} \,\infty .$$
In this situation $U=\bbH \backslash \lbrace p_1, \ldots , p_k \rbrace$ and $M$ is a circle bundle over $U$. The metric $g$ is not complete on $M$ but it extends smoothly to a complete metric on another manifold $\overline{M} \supset M$ obtained by adding points above each $p_i$ and an $\mathbb{R}^2$ along $z=0$.
If all the points $p_i=(a_i,b_i,c_i)$ lie in different vertical $z$-lines, i.e. if the $(a_i,b_i)$ are all different, then $\overline{M}$ is simply $Bl_{\xi_1, \ldots , \xi_k}\mathbb{C}^2$ where $\xi_i=(a_i+ib_i,0)$ for $i=1, \ldots , k$. On the other hand, if two or more of the points $p_i$ lie in the same vertical line, then $\overline{M}$ can be obtained by an iterated blow-up. For a proof of these facts see pages 233 and 234 in \cite{LeBrun}.

\section{Proof of the main result}

\subsection{The Dirichlet problem on hyperbolic space}

Recall that the sphere $S(\infty)$ at infinity in hyperbolic space $\bbH$ can be defined as the set of equivalence classes of geodesic rays emanating from some fixed interior point. The equivalence relation is given by declaring that two rays $\gamma_1,\gamma_2 : \mathbb{R}^+_0 \rightarrow \bbH$ are equivalent if there is $C>0$ such that $\dist(\gamma_1(t),\gamma_2(t)) < C <+\infty$ for all $t>0$. 

Then, one can compactify $\bbH$ by adding the sphere at infinity $\overline{\bbH}=\bbH \cup S(\infty)$ and solve the Dirichlet problem on $\overline{\bbH}$ for the hyperbolic Laplacian (see \cite{Andersson}, \cite{Sullivan} and also \cite{AnderssonSchoen}).

\begin{theorem}[\cite{Andersson}, \cite{Sullivan}]\label{thm:Dirichlet}
	Let $\beta:S(\infty) \to \mathbb{R^{+}}$ be a continuous function. Then, there is a unique $u_\beta: {\bbH} \to \mathbb{R}$ solving
	\begin{align*}
		& \Delta_h u_\beta  =0 , \\
		& u_\beta |_{S(\infty)}  =\beta^{-1} .
	\end{align*}
\end{theorem}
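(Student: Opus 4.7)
The plan is to prove existence and uniqueness separately via the classical Poisson integral representation on hyperbolic $3$-space. It is convenient to pass to the ball model of $\bbH$, in which the sphere at infinity is realised as the boundary unit sphere $S^2 = \partial B^3$. In that model the hyperbolic Poisson kernel $P \colon B^3 \times S^2 \to (0,\infty)$ admits an explicit closed form and satisfies three key properties: (i) for every fixed $\xi \in S^2$, the function $x \mapsto P(x,\xi)$ is $h$-harmonic on $B^3$; (ii) $\int_{S^2} P(x,\xi)\, d\sigma(\xi) = 1$ for every $x \in B^3$, where $d\sigma$ is a suitable normalisation of the round measure on $S^2$; and (iii) the family $\{P(x,\cdot)\, d\sigma\}_{x \in B^3}$ is an approximation of the identity, in the sense that $P(x,\cdot)\, d\sigma$ converges weakly to $\delta_{\xi_0}$ as $x \to \xi_0 \in S^2$.

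For existence, I would define
$$u_\beta(x) = \int_{S(\infty)} P(x,\xi)\, \beta(\xi)^{-1}\, d\sigma(\xi).$$
Differentiation under the integral and property (i) give $\Delta_h u_\beta = 0$ on $\bbH$. Positivity of $u_\beta$ follows from positivity of $\beta^{-1}$ together with positivity of $P$, while continuous attainment of the boundary values $u_\beta|_{S(\infty)} = \beta^{-1}$ is an immediate consequence of property (iii) applied to the continuous function $\beta^{-1}$.

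For uniqueness, suppose $u_1$ and $u_2$ are two $h$-harmonic solutions with the same boundary data. Then $w = u_1 - u_2$ is $h$-harmonic on $\bbH$, extends continuously to $\overline{\bbH}$, and vanishes identically on $S(\infty)$. Since $\overline{\bbH}$ is compact, the maximum principle applied to $w$ on $\overline{\bbH}$ forces $w \equiv 0$.

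The main obstacle is the continuous attainment of the boundary data, which in the upper half-space model is not uniform in the boundary point: the ideal point at infinity in $S(\infty)$ is not treated on the same footing as the finite boundary $\{z=0\}$. Working in the ball model bypasses this issue since every boundary point is equivalent under the hyperbolic isometry group; alternatively, one reduces to the finite case at any chosen boundary point via a M\"obius transformation of $\bbH$. With these standard properties of the hyperbolic Poisson kernel in place, both existence and uniqueness follow, which is precisely the content of the Anderson–Sullivan result cited in the statement.
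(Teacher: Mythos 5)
Your proposal is correct, but note that the paper does not prove this statement at all: Theorem \ref{thm:Dirichlet} is quoted as a known result of Anderson and Sullivan, so there is no in-paper proof to compare against. Your route --- the explicit hyperbolic Poisson integral on the ball model, with (i) harmonicity of the kernel in the interior variable, (ii) normalisation $\int P(x,\cdot)\,d\sigma = 1$, and (iii) the approximate-identity property giving continuous attainment of the boundary data, followed by uniqueness via the strong maximum principle on the compact space $\overline{\bbH}$ --- is the standard argument for hyperbolic space itself, and is essentially the one carried out in the Taylor notes that the paper also cites. The cited references prove something more general: Anderson treats arbitrary complete simply connected manifolds with pinched negative curvature by a Perron-type barrier construction (there is no explicit kernel in that setting), and Sullivan argues probabilistically via Brownian motion. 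Your approach buys explicitness --- in particular it is the natural starting point for the quantitative boundary regularity ($\beta^{-1}\in C^{1,\delta}$ implying a Euclidean gradient bound) that the paper later invokes from Taylor --- at the cost of working only on the symmetric space. Two small points to keep in mind: the uniqueness step implicitly uses that the boundary condition is understood as continuous extension to $\overline{\bbH}$ (without which uniqueness fails), and the passage from the maximum being attained at an interior point to $w\equiv 0$ requires the \emph{strong} maximum principle for the locally uniformly elliptic operator $\Delta_h$; both are fine but worth stating.
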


We shall now prove a result which will prove useful in analysing the local behaviour of the metrics which we shall construct.

\begin{lemma}\label{lemmaubeta}
	Let $\beta \in C^{\infty}(S(\infty))$ be nowhere vanishing and $u_\beta$ be given by Theorem \ref{thm:Dirichlet}. Then, there is $\epsilon >0$ and a smooth bounded function $v$ on $T_\epsilon = \lbrace q \in \bbH | \ z(q) \leq \epsilon \rbrace$ such that
	$$u_\beta = \beta^{-1} + zv \text{ in $T_\epsilon$,} $$
	where $z$ denotes the first coordinate in $\bbH$ as before and we extend $\beta$ to $T_\epsilon$ by making it constant in $z$.
\end{lemma}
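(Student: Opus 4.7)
My plan is to represent $u_{\beta}$ via the explicit hyperbolic Poisson integral on the upper half-space model, and then Taylor-expand the integrand in $z$ to read off the leading $O(z)$ correction to $\beta^{-1}$. In $\bbH=\{(z,x_{2},x_{3}):z>0\}$ the Poisson kernel for $\Delta_{h}$ with pole at a point $(y,0)$ on the floor is
\begin{equation*}
P(z,x;y)=\frac{1}{\pi}\frac{z^{2}}{(z^{2}+|x-y|^{2})^{2}},
\end{equation*}
and it integrates to $1$ in $y$ over $\mathbb{R}^{2}$. The corresponding Poisson integral
\begin{equation*}
u_{\beta}(z,x)=\frac{1}{\pi}\int_{\mathbb{R}^{2}}\frac{z^{2}\,\beta^{-1}(y)}{(z^{2}+|x-y|^{2})^{2}}\,dy
\end{equation*}
is a bounded harmonic function on $\bbH$ whose boundary values agree with $\beta^{-1}$ on the floor; a dominated-convergence argument after the substitution below also shows it attains the correct limit $\beta^{-1}(\infty)$ at the north pole of $S(\infty)$, so the uniqueness in Theorem \ref{thm:Dirichlet} identifies it with the Anderson--Sullivan solution.

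Next, the substitution $y=x+zw$ together with the fundamental theorem of calculus yield
\begin{equation*}
u_{\beta}(z,x)-\beta^{-1}(x)=\frac{z}{\pi}\int_{0}^{1}\!\int_{\mathbb{R}^{2}}\frac{w\cdot\nabla\beta^{-1}(x+tzw)}{(1+|w|^{2})^{2}}\,dw\,dt,
\end{equation*}
so the factor of $z$ is explicit and I would take $v$ to be the right-hand side divided by $z$. The boundedness of $v$ then reduces to that of $\nabla\beta^{-1}$ on $\mathbb{R}^{2}$, which in turn follows from smoothness of $\beta$ on the compact sphere $S(\infty)$: working in inverted coordinates around the north pole, the Jacobian forces $|\nabla\beta^{-1}|=O(|x|^{-2})$ for large $|x|$. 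Combined with integrability of $|w|(1+|w|^{2})^{-2}$ on $\mathbb{R}^{2}$ (integrand $\sim |w|^{-3}$ at infinity in dimension two), this gives a uniform bound on $v$ valid on all of $\bbH$, and in particular on $T_{\epsilon}$ for any $\epsilon>0$.

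Smoothness of $v$ on $T_{\epsilon}$ (which excludes $z=0$) is then automatic from interior elliptic regularity for $\Delta_{h}$ on $\bbH$ and smoothness of the $z$-independent extension of $\beta^{-1}$. I expect the only delicate point in turning this sketch into a proof to be the identification of the Poisson integral with the Anderson--Sullivan solution at the north pole of $S(\infty)$; this is handled by dominated convergence after the change of variables $y=x+zw$, which sends every finite $w$ to infinity in $\mathbb{R}^{2}$ as $(z,x)\to\infty$ in $\bbH$. As an alternative route, a barrier argument using $\Delta_{h}z=-z$ and $\Delta_{h}(u_{\beta}-\beta^{-1})=-z^{2}\Delta_{\mathbb{R}^{2}}\beta^{-1}$ together with the comparison functions $\pm Cz$ gives the same $|u_{\beta}-\beta^{-1}|\le Cz$ bound via the maximum principle, at the cost of a Phragm\'en--Lindel\"of argument in the unbounded strip $T_{\epsilon}$.
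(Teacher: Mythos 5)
Your argument is correct, but it takes a genuinely different route from the paper. The paper does exactly what you sketch as your ``alternative route'': it sets $\varphi_{\pm}=\beta^{-1}\pm Cz$, computes $\Delta_{h}\varphi_{\pm}=z\bigl(\pm C-z(\partial_{x_{2}}^{2}\beta^{-1}+\partial_{x_{3}}^{2}\beta^{-1})\bigr)$ to see these are super/subharmonic for $C$ large, arranges the comparison $\varphi_{-}\le u_{\beta}\le\varphi_{+}$ on $\partial\overline{T_{\epsilon}}=S(\infty)\cup\{z=\epsilon\}$ by enlarging $C$, and concludes by the maximum principle on the compact set $\overline{T_{\epsilon}}\subset\overline{\bbH}$ (so the Phragm\'en--Lindel\"of issue you worry about does not actually arise: continuity of $u_{\beta}$ up to $S(\infty)$ is part of the Anderson--Sullivan theorem, and $\overline{T_\epsilon}$ is compact in the compactification). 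Your main argument via the explicit Poisson kernel $P(z,x;y)=\pi^{-1}z^{2}(z^{2}+|x-y|^{2})^{-2}$ is sound: the normalization, the substitution $y=x+zw$, the fundamental-theorem-of-calculus identity, and the integrability of $|w|(1+|w|^{2})^{-2}$ on $\mathbb{R}^{2}$ all check out, and the identification with $u_{\beta}$ via the uniqueness clause of Theorem \ref{thm:Dirichlet} is legitimate (the only delicate point, continuity at the north pole, is handled adequately by your dominated-convergence remark). What each approach buys: your representation formula gives an explicit $v$, a bound valid on all of $\bbH$ rather than just $T_{\epsilon}$, and needs only $\beta^{-1}\in C^{1}$ (indeed Lipschitz), whereas the paper's barrier implicitly uses two derivatives of $\beta^{-1}$; on the other hand the barrier argument is softer, avoids the kernel computation and the identification step, and generalizes beyond the constant-curvature half-space model. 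One cosmetic remark: your sign conventions ($\Delta_{h}z=-z$) are the analyst's, opposite to the paper's displayed formula $\Delta_{h}\varphi=-z^{2}(\partial_{x_{2}}^{2}\varphi+\partial_{x_{3}}^{2}\varphi+\partial_{z}^{2}\varphi)+z\partial_{z}\varphi$; this is harmless but worth reconciling if you merge the two arguments.
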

\begin{proof}
	First we shall prove that there is $C>0$ such that the functions $\varphi_\pm = \beta^{-1} \pm Cz$ are respectively super(sub)-harmonic. This follows from the computation, using the upper-half-space model in which $z^2h=dx_2^2+dx_3^2+dz^2.$ We have
	\begin{align*}
		\Delta_h \varphi & = -z^2 (\partial_{x_2}^2 \varphi + \partial_{x_3}^2 \varphi + \partial_z^2 \varphi ) +z \partial_z \varphi .
	\end{align*}
	Applying this to $\varphi_\pm$ gives
	\begin{align*}
		\Delta_h \varphi_\pm & = z \left( \pm C -z\left( \partial_{x_2}^2 \beta^{-1} + \partial_{x_3}^2 \beta^{-1}\right) \right) ,
	\end{align*}
	which is positive/negative for $|C| \gg 0$ depending on whether we choose the positive/negative sign respectively. This finishes the proof that $\varphi_+$ is superharmonic and $\varphi_-$ is subharmonic.
	
	Next we show that it is possible to adjust $C$ so that
	\begin{equation}\label{eq:Comparison}
		\varphi_+ |_{\partial \overline{T_\epsilon}} \geq u_\beta |_{\partial \overline{T_\epsilon}} \geq \varphi_- |_{\partial \overline{T_\epsilon}}, 
	\end{equation}
	where $\overline{T_\epsilon}$ denotes the closure of $T_\epsilon$ in $\overline{\bbH}$. We have $\partial \overline{T_\epsilon} = S(\infty) \cup \lbrace z= \epsilon \rbrace$ and we shall analyse each of these components separately. In $S(\infty),$  $\varphi_\pm = \beta^{-1} = u_\beta$ and so \ref{eq:Comparison} holds. As for when $z=\epsilon$, we have $\varphi_{\pm} = \beta^{-1} \pm C \epsilon$ and by possibly increasing $C$ and decreasing $\epsilon$
	$$\beta^{-1} + C \epsilon \geq u_{\beta} |_{z=\epsilon} \geq \beta^{-1} - C \epsilon.$$
	Given that $u_\beta$ is harmonic and takes the values of $\beta^{-1}$ at $S(\infty)$ we find from the maximum principle that $\sup_{{\bbH}} u_\beta = \max_{S(\infty)} \beta^{-1}$ and $\inf_{\bbH} u_\beta = \min_{S(\infty)} \beta^{-1}$. Thus, it is enough to ensure that $C$ is big enough to satisfy
	$$\min_{S(\infty)} \beta^{-1} + C \epsilon \geq \max_{S(\infty)} \beta^{-1} \geq \min_{S(\infty)} \beta \geq \max_{S(\infty)} \beta^{-1} - C \epsilon,$$
	which can be achieved by setting $C> \frac{\max_{S(\infty)} \beta^{-1} - \min_{S(\infty)} \beta^{-1}}{\epsilon}$.
	
It follows from the maximum principle that inside $T_\epsilon$ we have
	$$\beta^{-1} - Cz \leq u_\beta \leq \beta^{-1} + C z, $$
	and the result follows.
\end{proof}

\begin{remark}
In fact, it seems to be known that $u_\beta$ can be chosen to be smooth on $\overline{\bbH}$ and this would be more than enough for our purposes (see \cite{t}). But such a result is much harder than our lemma so we thought it worthwhile to give a proof of what we need.
\end{remark}

\subsection{Local behaviour of the metric along the strict transform of $\mathbb{C} \times \lbrace 0 \rbrace$}

The K\"ahler structures which we shall study are those constructed by Theorem \ref{thm:LeBrun} setting 
\begin{equation}\label{eq:V}
	V=u_\beta + \sum_{i=1}^l G_{p_l},
\end{equation}
where $u_\beta$ is the solution of the Dirichlet problem in Theorem \ref{thm:Dirichlet} for a positive function $\beta$ and $G_{p_i}$ are the Green's functions associated with the points $\lbrace p_i \rbrace_{i=1, \ldots , l} \subset \bbH$ as in the discussion following Theorem \ref{thm:LeBrun}. Note that $u_\beta$ is positive since $\inf_{{\bbH}} u_\beta = \min_{S(\infty)} \beta^{-1}$ therefore $V$ is positive.

The same analysis as that in pages 231 and 232 of \cite{LeBrun} shows that such structure extends smoothly over points $\lbrace \tilde{p_i} \rbrace_{i=1, \ldots ,l}$ added to $M$ above the points $\lbrace p_i \rbrace_{i=1, \ldots , l} $. Hence, we shall now analyse the local behaviour of these metrics near $z=0$.

Given that $G_{p_i}$ decay to zero with $z^2$, the function $V$ has the same asymptotic behaviour as $u_\beta$ and near $z=0$. In order to show that $g$ has cone angle $\beta$ along $z=0$ it is enough to show that the conformally related $V^{-1}g$ has cone angle $\beta$ along $z=0$ (recall that $V$ is smooth near $S(\infty)$ and uniformly bounded below by $\min_{S(\infty)} \beta$). Now
\begin{align*}
	V^{-1}g & = z^2 V^{-2} \eta^2 + z^2h  \\
	& = z^2 V^{-2} \eta^2 + dz^2 + g_{\mathbb{R}^2} \\
	& = z^2 \left(\beta^{-1} + z w\right)^{-2} \eta^2 + dz^2 + g_{\mathbb{R}^2} \\
	& = \beta^2 z^2 \eta^2 + dz^2 + g_{\mathbb{R}^2} + z^{3} \mu,
\end{align*}
for some smooth symmetric $2$-tensor $\mu$ which is bounded with respect to $\beta^2 z^2 \eta^2 + dz^2 + g_{\mathbb{R}^2}$ and some smooth function $w$. This is precisely the form a metric with cone singularities $\beta$ along $z=0$, see page 101 in \cite{Jeffers} and also page 2 in \cite{Atiyah}.
For completeness we note that we have a holomorphic coframing
$$\alpha_1 = dx+idy , \text{ and } \alpha_2= dz+i zV^{-1} \eta,$$
and the K\"ahler form satisfies
$$V^{-1}\omega = \frac{i}{2} \alpha_1 \wedge \overline{\alpha_1} +  \frac{i}{2} \alpha_2 \wedge \overline{\alpha_2}. $$

\subsection{Asymptotic behaviour of the metrics and their completeness}\label{ss:asymptotic}

We shall start with the definition

\begin{definition}
	Let $\beta : \mathbb{C} \to \mathbb{R}^+$. We define the standard warped product metric on $\mathbb{C}^2$ with cone angle $\beta$ along $\mathbb{C} \times \lbrace 0 \rbrace$ to be the metric given by
	$$g_\beta= dz^2 + \beta^2 z^2 d \theta^2 + g_{\mathbb{C}},$$
	with $(z, \theta)$ polar coordinates on the first factor of $\mathbb{C}^2 = \mathbb{C} \times \mathbb{C}$ and $g_{\mathbb{C}}$ the Euclidean metric on the second.
\end{definition}

\begin{remark}\label{rem:Kahler}
	Notice that, as we define it, the standard warped product metric above is not K\"ahler. As it will become apparent below, this is to be expected as our metrics will only be asymptotically conformal to such metrics. Indeed, one can immediately check that the conformally transformed metric $\beta^{-1} g_\beta$ is indeed K\"ahler as its associated $2$-form is given by $dz \wedge z d \theta + \beta dx_2 \wedge dx_3$ where $(x_2,x_3)$ are coordinates on the second factor of $\mathbb{C}^2$.
\end{remark}

Consider first the case when $k=0$, i.e. $V=u_\beta$. In this situation we obtain a scalar-flat K\"ahler metric on $\mathbb{C}^2$ with a varying cone singularity $\beta$ along $\mathbb{C} \times \lbrace 0 \rbrace$. By the maximum principle $u_\beta$ is uniformly bounded above and below away from zero therefore the metric $g$ is quasi-isometric to $u_\beta^{-1}g$ which can be written as
$$u_\beta^{-1}g = \frac{z^2}{u_\beta^2} \eta^2 + dz^2 + g_{\mathbb{R}^2}. $$
We can write $\eta=d\theta + A$ for $\theta$ a coordinate along the circle fibres and $A.$
{
\begin{lemma} 
In the setting above, {if $\beta^{-1} \in C^{1,\delta}(S(\infty)),$} the $1$-form $A$ can be chosen so as to decay as $(z,x_2,x_3)$ goes to infinity. 
\end{lemma}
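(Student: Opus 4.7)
The 1-form $A$ must satisfy $dA = \omega := \ast_h dV$ with $\omega$ closed (as $V = u_\beta$ is $h$-harmonic), and since $\bbH$ is contractible it is determined up to the gauge $A \mapsto A + df$. The plan is to use boundary regularity of $V$ to control $\omega$ at Euclidean infinity and then construct a primitive that decays, using the gauge freedom to polish the answer.

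First I would establish decay of $V$ and its Euclidean derivatives at infinity. Transfer the Dirichlet problem to the Poincar\'e ball model via an isometry $F \colon \bbH \to B^3$ sending the point at infinity of $\bbH$ to a boundary point $\xi_0 \in \partial B^3$. Under $F$, the datum $\beta^{-1} \in C^{1,\delta}(S(\infty))$ becomes an element of $C^{1,\delta}(\partial B^3)$, and up-to-the-boundary Schauder theory for the hyperbolic Dirichlet problem (in the spirit of \cite{AnderssonSchoen}) yields $\tilde V := V \circ F^{-1} \in C^{1,\delta}(\overline{B^3})$. Since $F$ is M\"obius and sends the point at infinity to $\xi_0$, it behaves as a Euclidean inversion for large $r := |(z, x_2, x_3)|$, so $|F(y) - \xi_0| \asymp r^{-1}$ and $|DF(y)| \asymp r^{-2}$. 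The $C^{1,\delta}$ Taylor expansion of $\tilde V$ at $\xi_0$ then translates, for $y = (z, x_2, x_3)$ with $r \to \infty$, into
\[
V(y) = \beta^{-1}(\infty) + O(r^{-1}), \qquad |\nabla_{\mathrm{Eucl}} V|(y) = O(r^{-2}).
\]

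Next, combining these estimates with the explicit formula for $\omega$ in Euclidean coordinates gives $|\omega|_{\mathrm{Eucl}}(y) = O(z(y)^{-1} r^{-2})$ at infinity, so I would produce a primitive by integrating along radial rays from infinity. With $\phi_s(y) := sy$ for $s \in [1, \infty)$, define
\[
A(x)(Y) := -\int_1^\infty s \, \omega(sx)(x, Y)\, ds .
\]
The decay of $\omega$ makes this integral absolutely convergent and gives a global 1-form with $dA = \omega$ (the boundary contribution at infinity vanishes), together with the bound $|A|_{\mathrm{Eucl}}(x) \leq C z(x)^{-1} |x|^{-1}$, which tends to zero as $r \to \infty$ whenever $z$ stays bounded below.

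The main obstacle is the first step: establishing $C^{1,\delta}$ regularity of the hyperbolic-harmonic extension up to the conformal boundary, which rests on boundary Schauder estimates for the degenerate elliptic Dirichlet problem in the ball model and is precisely where the hypothesis $\beta^{-1}\in C^{1,\delta}$ is used. A secondary issue is that the radial primitive above need not decay in approaches to infinity that hug the divisor $\{z = 0\}$, since the $z^{-1}$ factor in $\omega$ is uncontrolled there; this is handled using the gauge freedom $A \mapsto A + df$ to subtract the explicit primitive of the leading dipole contribution $\nabla \tilde V(\xi_0) \cdot F(y)$ coming from the linear term of the Taylor expansion of $\tilde V$ at $\xi_0$, so that the residual 1-form decays uniformly as $r \to \infty$.
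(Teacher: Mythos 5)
Your first step coincides with the paper's: both arguments reduce the lemma to up-to-the-boundary $C^{1}$ regularity of the $h$-harmonic extension in the ball model (the paper quotes Proposition 6.3 of \cite{t} to get $|du_\beta|_E\leq C$ there) and then push this through the M\"obius identification with the half-space to obtain $|du_\beta|_{\mathrm{Eucl}}=O\bigl(((z+1)^2+x_2^2+x_3^2)^{-1}\bigr)$; you correctly identify this as the place where $\beta^{-1}\in C^{1,\delta}(S(\infty))$ enters. The divergence --- and the gap --- is in how you pass from decay of $\omega=\ast_h dV$ to decay of a primitive. The paper places $A$ in Coulomb gauge and invokes elliptic estimates for $d+d\ast$, arriving at $|A_\beta|\leq C z/((z+1)^2+x_2^2+x_3^2)$, a bound which \emph{vanishes} at the divisor. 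Your radial homotopy operator instead yields $|A|_{\mathrm{Eucl}}\lesssim z^{-1}r^{-1}$, and this bound is essentially sharp for that particular primitive (the leading contribution comes from $s$ near $1$, where $\iota_x\omega$ picks up the term $z^{-1}\partial_z V\,(x_2\,dx_3-x_3\,dx_2)$ of size $z^{-1}r^{-1}$). Such an $A$ fails to decay along sequences going to infinity with $z\lesssim r^{-1}$, which is exactly the regime the lemma is used for: in Subsection \ref{ss:asymptotic} the decay of $A$ is needed for all points with $x_2^2+x_3^2\geq R_0^2$ at every height $z$, in order to identify $u_\beta^{-1}g$ with $dz^2+\beta^2z^2\,d\theta^2+g_{\mathbb{R}^2}$ near the strict transform of $\mathbb{C}\times\lbrace 0\rbrace$.

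You do flag this, but the proposed repair does not address the obstruction. The troublesome factor in $|\omega|_{\mathrm{Eucl}}=z^{-1}|dV|_{\mathrm{Eucl}}$ comes from $\ast_h$ acting on $1$-forms near the conformal boundary, not from the linear term of the Taylor expansion of $\tilde V$ at $\xi_0$: after subtracting that term the remainder still satisfies only $|\omega_{\mathrm{rem}}|_{\mathrm{Eucl}}=O(z^{-1}r^{-2-\delta})$, and radial integration still produces $O(z^{-1}r^{-1-\delta})$, unbounded for $z\ll r^{-1-\delta}$. Moreover the linear term $\nabla\tilde V(\xi_0)\cdot(F(y)-\xi_0)$ is not $h$-harmonic, so its image under $\ast_h d$ is not closed and admits no primitive to subtract; and a genuine gauge change $A\mapsto A+df$ cannot alter $dA$ in any case. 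To close the gap you need either the paper's Coulomb-gauge plus elliptic-estimate step, or a homotopy adapted to the boundary (for instance integrating from $z=0$ upward) that produces a primitive vanishing at the divisor. A minor further point: you treat only $V=u_\beta$, whereas the paper's proof splits $A=A_\beta+A_0$ and defers the Green's-function piece $A_0$ to LeBrun's estimates; since the lemma is stated in the $k=0$ setting this is forgivable, but worth recording.
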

\begin{proof}
Recall that $d A = \ast_h dV$ where $V=u_\beta+\sum_{i=1}^k G_{p_i}$.  Let us choose $A$ to be in Coulomb Gauge i.e.
$$
\begin{cases}
d \ast_h A_\beta =0,\\
d A = \ast_h dV.
\end{cases}
$$
By linearity $A=A_\beta+A_0$ where $d A_\beta = \ast_h du_\beta$ and $d A_0 = \ast_h d\sum_{i=1}^k G_{p_i}$.
In \cite{LeBrun}, Lebrun uses asymptotic properties for the Green's function on hyperbolic space to show that the lemma holds for $A_0,$ when in Coulomb gauge. Hence, it is enough to prove the result for $A_\beta$ in Coulomb gauge, i.e. $d \ast_h A_\beta =0$.

We start by observing that, by Proposition 6.3 in \cite{t}, if $\beta^{-1} \in C^{1,\delta}(S(\infty))$, the euclidean norm of $du_\beta$ is bounded. More precisely consider the disk model for the hyperbolic metric:
$$
h=\frac{dy_1^2+dy_2^2+dy_3^2}{1-\left(y_1^2+y_2^2+y_3^2\right)},
$$
where $(y_1,y_2,y_3)$ take values on the ball of radius $1$ in $\bbR^3.$ There is $C$ such that
$$| d u_\beta |_E \leq C ,$$
where $|\cdot|_E$ is the Euclidean metric $dy_1^2+dy_2^2+dy_3^2$ on the ball. This implies that $|\partial_{y_i} u_\beta|<C$ for $i=1,2,3$. The relation between the upper half plane model and the disk model for the hyperbolic metric gives
\begin{eqnarray}
	y_1 & = \frac{x_2^2+x_3^2-1}{(z+1)^2+x_2^2+x_3^2} \\
	y_2 & = \frac{2x_2}{(z+1)^2+x_2^2+x_3^2} \\
	y_3 & = \frac{2x_3}{(z+1)^2+x_2^2+x_3^2} .
\end{eqnarray}
A direct computation shows  that there is $C'$ such that $$|\partial_{x_j} y_i|+|\partial_z y_i| \leq\frac{C'}{ (z+1)^2+x_2^2+x_3^2},$$ for all $i \in \lbrace 1,2,3 \rbrace$ and $j\in \lbrace 1,2 \rbrace $. As a consequence, we have that there is $C''$ such that
$$|\partial_z u_\beta| + |\partial_{x_2} u_\beta| + |\partial_{x_3} u_\beta| \leq  \frac{C''}{(z+1)^2+x_2^2+x_3^2} ,$$
and thus
$$|dA_\beta|= z^2 |dA_\beta|_h=z^2 |du_\beta|_h = z |du_\beta| \leq \frac{C^{(3)}z}{(z+1)^2+x_2^2+x_3^2} ,$$
where $C^{(3)}$ is some constant and $|\cdot|$ denotes the norm with respect to the Euclidean metric $dz^2+dx_2^2+dx_3^2$ in the upper half-space. This together with standard elliptic estimates on the operator $d + d \ast$ then shows that, in Coulomb gauge, $A_\beta$ satisfies 
$$|A_\beta| \leq \frac{C^{(4)}z}{(z+1)^2+x_2^2+x_3^2} ,$$
where $C^{(4)}$ is some constant and thus decays as either $z$ or $x_2^2+x_3^2$ go to infinity.

\end{proof}

We use the above lemma and the fact that $u_\beta = \beta^{-1} + zv$ for some bounded $v$ (Lemma \ref{lemmaubeta}) to see that} for large $z>Z_0 \gg 1$ or $x_2^2+x_3^2 \geq R_0^2 \gg 1$ we can write the quasi-isometric metric $u_\beta^{-1}g$ as
$$dz^2 + \beta^2 z^2 d \theta^2 + g_{\mathbb{R}^2},$$
up to lower order terms in $z^2$ or $x_2^2+x_3^2$. This is precisely what we called the standard warped product metric on $\mathbb{C}^2$ with varying cone angle $\beta$ along $\mathbb{C} \times \lbrace 0 \rbrace$. Hence, for large $z^2$ or $x_1^2+x_3^2$ the metric $g$ is quasi-isometric to $g_\beta$ and so the length of geodesics for $g$ can be uniformly controlled by the length of geodesics for $g_\beta$ and vice-versa. This shows that $g$ is complete (in the sense defined immediately before the statement of Theorem \ref{thm:Main}) if and only if $g_\beta$ itself is. On the other hand, the fact that $g_\beta$ is complete can be deduced from the fact that it is itself quasi-isometric to the Euclidean metric.

We have thus proved the following.

\begin{lemma}\label{lem:Asymptotic 0}
	Let $\beta: \mathbb{C} \times \lbrace 0 \rbrace \to \mathbb{R}^+$ be a smooth function which extends as a positive function over $\mathbb{CP}^1 \times \lbrace 0 \rbrace=\left( \mathbb{C} \cup \lbrace \infty \rbrace\right) \times \lbrace 0 \rbrace.$ Assume that {$\beta^{-1} \in C^{1,\delta}(S(\infty))$} and { let $(g, \omega)$} be the scalar-flat K\"ahler metric on $\mathbb{C}^2$ from Theorem \ref{thm:LeBrun} with $V=u_\beta$. Then $g$ is asymptotically quasi-isometric to the standard warped product metric on $\mathbb{C}^2$. In particular, it is complete.
\end{lemma}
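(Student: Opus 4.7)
The plan is to establish completeness of $g$ by first passing to a conformally related metric via the maximum principle, then identifying an explicit asymptotic model, and finally invoking completeness of that model.

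First, I would apply the maximum principle to the harmonic function $u_\beta$ on the compactification $\overline{\mathbb{H}}$. Since $u_\beta|_{S(\infty)} = \beta^{-1}$ with $\beta$ positive on the compact sphere at infinity, this yields the two-sided bound
\[
\min_{S(\infty)} \beta^{-1} \;\leq\; u_\beta \;\leq\; \max_{S(\infty)} \beta^{-1}.
\]
Hence the conformal factor $u_\beta^{-1}$ is bounded above and below by positive constants, so $g$ and $u_\beta^{-1} g$ are globally quasi-isometric. Because completeness in the sense of the theorem (geodesics avoiding the divisor $D = \{z=0\}$ extend for all time) is invariant under quasi-isometry, it suffices to prove the statement for $u_\beta^{-1} g$.

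Next, I would expand, using Theorem~\ref{thm:LeBrun},
\[
u_\beta^{-1} g \;=\; \frac{z^2}{u_\beta^2}\, \eta^2 \;+\; dz^2 \;+\; dx_2^2 + dx_3^2,
\]
and decompose $\eta = d\theta + A$ with $\theta$ the fibre coordinate on the circle bundle. The decay lemma immediately preceding the statement gives $|A| \to 0$ as $z \to \infty$ or $x_2^2 + x_3^2 \to \infty$, so that $\eta^2 = d\theta^2 + o(1)$ at infinity. Combining this with Lemma~\ref{lemmaubeta}, which provides $u_\beta = \beta^{-1} + O(z)$ for small $z$, and with $u_\beta|_{S(\infty)} = \beta^{-1}$, one obtains $z^2 u_\beta^{-2} = \beta^2 z^2 + o(z^2)$ in the region $\{z > Z_0\} \cup \{x_2^2 + x_3^2 > R_0^2\}$ for $Z_0, R_0 \gg 1$. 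Putting these estimates together,
\[
u_\beta^{-1} g \;=\; dz^2 + \beta^2 z^2\, d\theta^2 + dx_2^2 + dx_3^2 + E,
\]
where $E$ is a symmetric $2$-tensor whose norm relative to the warped product model tends to zero at infinity. This is precisely the statement that $u_\beta^{-1} g$ and $g_\beta$ are quasi-isometric outside a compact set.

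Finally, I would verify that $g_\beta$ itself is complete. Since $\beta$ extends to a smooth positive function on the compact space $\mathbb{CP}^1 \times \{0\}$, it admits uniform positive upper and lower bounds, so the angular part $\beta^2 z^2\, d\theta^2$ is uniformly comparable to the Euclidean $z^2 d\theta^2$. Hence $g_\beta$ is quasi-isometric, away from $\{z=0\}$, to the flat Euclidean metric on $\mathbb{C}^2$, which is complete in the desired sense. Chaining back through the two quasi-isometries $g \sim u_\beta^{-1}g \sim g_\beta$ gives completeness of $g$. The principal obstacle in this argument is the asymptotic decay of $A$ in Coulomb gauge, which requires the elliptic estimates and boundary regularity $\beta^{-1} \in C^{1,\delta}(S(\infty))$ established in the preceding lemma; once that is in hand, the remaining steps are direct comparisons.
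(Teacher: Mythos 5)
Your proposal is correct and follows essentially the same route as the paper: the maximum principle gives uniform two-sided bounds on $u_\beta$ so that $g$ and $u_\beta^{-1}g$ are quasi-isometric, then the Coulomb-gauge decay of $A$ together with Lemma \ref{lemmaubeta} identifies $u_\beta^{-1}g$ with the warped product model $g_\beta$ up to terms decaying at infinity, and completeness follows since $g_\beta$ is quasi-isometric to the Euclidean metric. No substantive differences from the paper's argument.
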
 

As for the case when $k \geq 1$ we have $V= u_\beta + \sum_{i=1}^k G_{p_i}$ and given that $G_{p_i}$ decays exponentially with respect to the hyperbolic distance to some interior point in $\bbH$ we find that $V \sim u_\beta + \ldots$ where $\ldots$ denotes lower order terms. This shows the following.

\begin{lemma}
	Let $\beta: \mathbb{C} \times \lbrace 0 \rbrace \to \mathbb{R}^+$ be a smooth function which extends continuously as a positive function  over $\mathbb{CP}^1 \times \lbrace 0 \rbrace=\left( \mathbb{C} \cup \lbrace \infty \rbrace\right) \times \lbrace 0 \rbrace$ and $\xi_1, \ldots , \xi_k \in \mathbb{C} \times \lbrace 0 \rbrace \subset \mathbb{C}^2.$  {Assume $\beta^{-1} \in C^{1,\delta}(S(\infty))$.} Let $(g, \omega)$ be the scalar-flat K\"ahler metric on $Bl_{\xi_1, \ldots ,\xi_k}\mathbb{C}^2$ from Theorem \ref{thm:LeBrun} with $V=u_\beta+ \sum_{i=1}^k G_{p_i}$. Then $g$ is asymptotic to the metric $g_0$ from Lemma \ref{lem:Asymptotic 0}. In particular, it is complete.
\end{lemma}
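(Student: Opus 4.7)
The plan is to reduce to the previous Lemma \ref{lem:Asymptotic 0} by showing that the Green's function contributions to $V$, as well as to the associated connection one-form, decay sufficiently fast in the asymptotic region. The main obstacle will be justifying the decay of the connection form in a suitable gauge; fortunately this is precisely the content of LeBrun's original asymptotic analysis in \cite{LeBrun}, so the work required on top of the previous lemma is mild.

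First I would recall that the hyperbolic Green's function $G_p$ based at any fixed interior point $p \in \bbH$ decays exponentially with respect to hyperbolic distance, $G_p(q) = O(e^{-2 d_h(q,p)})$, and the same holds for its derivatives. Since all the $p_i$ lie in the interior of $\bbH$, one has $d_h((z,x_2,x_3),p_i) \to \infty$ both as $z \to \infty$ and as $x_2^2+x_3^2 \to \infty$, so $\sum_{i=1}^k G_{p_i}$ and its derivatives vanish exponentially in the asymptotic region; in particular, $V = u_\beta + \sum_{i=1}^k G_{p_i}$ agrees with $u_\beta$ up to exponentially small errors, and the uniform positivity of $V$ outside a compact set follows from that of $u_\beta$ (given by the maximum principle). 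Next I would decompose the connection form as $\eta = d\theta + A$ with $A = A_\beta + A_0$ in Coulomb gauge, where $dA_\beta = \ast_h du_\beta$ and $dA_0 = \ast_h d(\sum_{i=1}^k G_{p_i})$. The decay of $A_\beta$ at infinity was established in the proof of Lemma \ref{lem:Asymptotic 0}, while the decay of $A_0$ is exactly what LeBrun shows in \cite{LeBrun}. Combining these two facts, in the regime $z \gg 1$ or $x_2^2+x_3^2 \gg 1$ the metric $g = z^2 V^{-1}\eta^2 + z^2 V h$ is quasi-isometric to the metric $g_0$ built from $V = u_\beta$ alone, and by Lemma \ref{lem:Asymptotic 0} the latter is in turn asymptotically quasi-isometric to the standard warped product $g_\beta$, hence to the Euclidean metric.

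For completeness, any geodesic of $g$ avoiding the divisor $D$ either stays inside a compact subset of $\Bl_{\xi_1,\ldots,\xi_k}\mathbb{C}^2$, in which case it extends for all time (using the fact that the metric extends smoothly across the exceptional points $\tilde p_i$ added above each $p_i$, as noted earlier following Theorem \ref{thm:LeBrun}), or eventually enters the asymptotic region, where the quasi-isometry with the Euclidean metric ensures that its length parameter extends to infinity. This yields the claimed completeness.
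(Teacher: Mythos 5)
Your proposal is correct and follows essentially the same route as the paper: the exponential decay of the hyperbolic Green's functions $G_{p_i}$ (and of their derivatives, hence of the Coulomb-gauge piece $A_0$ of the connection form, as in LeBrun's original analysis) shows that $V$ and $\eta$ agree with those of the $k=0$ case up to lower-order terms, reducing everything to Lemma \ref{lem:Asymptotic 0}. The paper's own proof is just a one-sentence version of this reduction, so your added detail on the gauge decomposition and on completeness is a faithful elaboration rather than a different argument.
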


{Our main Theorem \ref{thm:Main} follows from this lemma since if $\beta: \mathbb{C} \times \lbrace 0 \rbrace \to \mathbb{R}^+$ extends smoothly as a positive function to $S(\infty),$ then $\beta^{-1}$ is smooth on $S(\infty)$ and $\beta^{-1} \in C^{1,\delta}(S(\infty))$ for any positive $\delta.$ In fact the lemma shows a slightly stronger result than that stated in Theorem \ref{thm:Main} as we only used $\beta^{-1} \in C^{1,\delta}(S(\infty))$ in our proof.}
%




\begin{thebibliography}{99}
	

\bibitem{Andersson} M. Anderson. \textit{The Dirichlet problem at infinity for manifolds of negative curvature.} Journal of differential geometry {\bf 18} 4 (1983) 701-721.

\bibitem{AnderssonSchoen} M. Anderson and R. Schoen. \textit{Positive harmonic functions on complete manifolds of negative curvature.} Annals of mathematics {\bf 121} 2 (1985) 429-461.

\bibitem{Atiyah} M. Atiyah and C. Lebrun. \textit{Curvature, cones and characteristic numbers.} Mathematical Proceedings of the Cambridge Philosophical Society. {\bf 155}  1 (2013) 13--37, 

\bibitem{b} M. De Borbon. \textit{The Gibbons--Hawking ansatz over a wedge.} Journal of Geometry and Physics, {\bf 120} (2017) 228--241.

\bibitem{bs} M. de Borbon and C. Spotti. \textit{Local models for conical K\"ahler-Einstein metrics.} Proc. Amer. Math. Soc. {\bf 147} (2019) 1217--1230.

\bibitem{cds} X. Chen, S. Donaldson, and S. Sun. \textit{K\"ahler-Einstein metrics on Fano manifolds. I: Approximation of metrics with cone singularities.} J. Amer. Math. Soc. {\bf 28}  1 183--197.

\bibitem{dconeangle} S. Donaldson. \textit{K\"ahler metrics with cone singularities along a divisor.} In Essays in mathematics and its applications, (2012) 49--79. 


\bibitem{GH} G. Gibbons and S. Hawking. \textit{Gravitational multi-instantons.} Physics Letters B {\bf 78} 4 (1978) 430--432.

\bibitem{j} T. Jeffres \textit{Uniqueness of K\"ahler-Einstein cone metrics.} Publ. Mat. {\bf 44} 2 (2000) 437--448.

\bibitem{Jeffers} T. Jeffres, R. Mazzeo, and Y. Rubinstein. \textit{K\"ahler Einstein metrics with edge singularities.} Annals of Mathematics {\bf 183} 1 (2016) 95--176.

\bibitem{LeBrun} C. LeBrun. \textit{Explicit self-dual metrics on $\mathbb {CP} _2\#\cdots\#\mathbb {CP} _2$.} Journal of Differential Geometry {\bf 34}1 (1991) 223--253.

\bibitem{m} R. Mazzeo \textit{K\"ahler-Einstein metrics singular along a smooth divisor.} Journ\'ees Equations aux D\'eriv\'ees Partielles VI (1999) 1--10.

\bibitem{Sullivan} D. Sullivan \textit{The Dirichlet problem at infinity for a negatively curved manifold.} Journal of differential geometry {\bf 18} 4 (1983) 723--732.

\bibitem{t} M. Taylor \textit{The Dirichlet Problem on the Hyperbolic Ball.} {https://mtaylor.web.unc.edu/wp-content/uploads/sites/16915/2018/04/dphb.pdf}


 \end{thebibliography}
\end{document}